\newtheorem{theorem}{Theorem}[]
\newtheorem{proposition}[theorem]{Proposition}
\DeclareMathOperator{\Aut}{Aut}
\DeclareMathOperator{\GL}{GL}
\newcommand{\Z}{\mathbb{Z}}
\begin{document}
\title{A Group with Exactly One Noncommutator} 
\author{Omar Hatem}
\author{Daoud Siniora}
\date{\today}
\address{The American University in Cairo, P.O.Box 74, Cairo, Egypt}
\email{omarhatem2002@aucegypt.edu}
\email{daoud.siniora@aucegypt.edu}
\maketitle

\let\thefootnote\relax
\footnotetext{MSC2010: Primary: 20F12, Secondary 20-08.} 

\begin{abstract}
The question of whether there exists a finite group of order at least three in which every element except one is a commutator has remained unresolved in group theory. We address this open problem by developing an algorithmic approach that leverages several group theoretic properties of such groups. Specifically, we utilize a result of Frobenius and various necessary properties of such groups, combined with Holt and Plesken’s list of finite perfect groups, to systematically examine all finite groups up to a certain order for the desired property. The computational core of our work is implemented using the computer system GAP. We discover two nonisomorphic groups of order 368,640 that exhibit the desired property. Our investigation also establishes that this is the smallest order of such a group. This study provides a positive answer to Problem 17.76 in the Kourovka Notebook. In addition to the algorithmic framework, this paper provides a structural description of one of the two groups found. 
\end{abstract} 

\section{Introduction}

Ore proved that every element of the alternating group $A_n$ is a commutator for $n\geq 5$, and he conjectured the same holds for all nonabelian finite simple groups \cite{Ore1951}. Ore's conjecture was then proved by Liebeck, O'Brien, Shalev, and Tiep \cite{Liebeck2010}. Thus, every element of every finite nonabelian simple group is a commutator. MacHale asked if there exists a finite group of order at least 3 that has exactly one noncommutator. This is Problem 17.76 of the Kourovka Notebook \cite{khukhro2024unsolvedproblemsgrouptheory}. 

Fite showed that the smallest finite group where the commutator subgroup contains noncommutators has order 96, see \cite{Fite1902}. Macdonald showed that if $G$ is a group and if $|G:Z(G)|^2 <|G'|$, then there exists elements in $G'$ which are not commutators \cite{Macdonald1986}. Isaacs also showed that a certain wreath product of an abelian group with a nonabelian group has a commutator subgroup with noncommutators \cite{Isaacs1977}. More examples of groups  where the set of commutators is a proper subset of the commutator subgroup can be found in a survey by Kappe and Morse \cite{Kappe2005}. In this paper, we give a positive answer to MacHale's question. Using the computer algebra system GAP \cite{GAP4}, we found two nonisomorphic groups of order $2^{10}\cdot 360$ which have exactly one noncommutator. Moreover, this is the smallest order of a group with this property. In this paper, we describe the structure of one of them.

We now recall the definitions of the terms used in this paper. Let $G$ be a group and let $g,h\in G$. The \textit{commutator} of $g$ and $h$ is $[g,h]=ghg^{-1}h^{-1}$. An element $c$ of $G$ is a \textit{commutator} if there exist $g\in G$ and $h\in G$ such that $c=[g,h]$. An element of $G$ is a \textit{noncommutator} if it is not a commutator. The \textit{commutator subgroup} $G'$ of $G$ is the subgroup generated by all the commutators of $G$. It is the smallest normal subgroup of $G$ such that the quotient of the original group by this subgroup is abelian. A group is \textit{perfect} if it equals its own commutator subgroup. For instance, every nonabelian simple group is perfect.

We learned in November 2025, after this work was accepted for publication, that Skresanov \cite{skresanov2025finitegroupsexactlynoncommutator} has also recently solved this problem. He gives an infinite family of groups, each having precisely one noncommutator. The smallest group in his family has order 16,609,443,840. He asks for the smallest order of such a group: here we show that it is 368,640.


\section{Computational Search}
We search for a finite group which has exactly one noncommutator. The first such example is the cyclic group of order 2. Are there any others? We used GAP to search for such a group. We first state some key properties about such a group and its unique noncommutator element that speed up the search.
\begin{proposition}
Let $G$ be a finite group with $|G|\geq 3$ that has exactly one noncommutator element $u\in G$. Then the following hold.  
\begin{itemize}
            \item The order of $u$ is $2$.
            \item The element $u$ belongs to the center $Z(G)$ of $G$.
            \item $G$ is a perfect group.
            \item The element $u$ is the product of two commutators. Consequently, the commutator length of $G$ is $2$.
\end{itemize}
\label{four_props}
\end{proposition}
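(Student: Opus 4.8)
\section*{Proof proposal}

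The plan is to exploit two elementary closure properties of the set $C$ of commutators of $G$. First, $C$ is closed under inversion, since $[a,b]^{-1}=[b,a]$; second, $C$ is closed under conjugation, since $g[a,b]g^{-1}=[\,gag^{-1},\,gbg^{-1}\,]$. Because the set of noncommutators is the singleton $\{u\}$, its complement $C$ is invariant under these two operations, so $\{u\}$ itself is invariant under inversion and conjugation. Invariance under inversion gives $u^{-1}=u$; and since $e=[e,e]$ is a commutator we have $u\neq e$, so $u$ has order exactly $2$. Invariance under conjugation gives $gug^{-1}=u$ for every $g\in G$, i.e.\ $u\in Z(G)$. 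This settles the first two bullets.

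For perfectness I would count orders. The subgroup $G'$ contains every commutator, hence $|G'|\geq |G|-1$; but $|G'|$ divides $|G|$, and any proper divisor of $|G|$ is at most $|G|/2$, which is strictly less than $|G|-1$ once $|G|\geq 3$. Therefore $|G'|=|G|$, so $G$ is perfect --- this is precisely where the hypothesis $|G|\geq 3$ enters, the group $\Z/2\Z$ being a genuine exception. In particular $G$ is a nontrivial perfect group and hence nonabelian, so there exist $g,h\in G$ with $c=[g,h]\neq e$.

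For the final bullet, observe that $u$ is a product of commutators because $G=G'$, but it is not a single commutator, so its commutator length is at least $2$. To see it equals $2$, take the nontrivial commutator $c$ produced above: since $c\neq e$ we have $uc\neq u$, so $uc$ is a commutator, and then $u=(uc)\,c^{-1}$ expresses $u$ as a product of two commutators (note $c^{-1}=[h,g]$ is again a commutator). Since every element of $G$ other than $u$ is itself a commutator, the commutator length of $G$ is exactly $2$.

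I do not expect a real obstacle here: the whole argument is a short chain of elementary observations. The one point requiring care is invoking $|G|\geq 3$ at the counting step for perfectness (and, implicitly, to guarantee that a nontrivial commutator exists for the last bullet); the cleanest conceptual move is recognizing that passing to the complement converts the closure properties of $C$ directly into the order and centrality statements for $u$ in one stroke.
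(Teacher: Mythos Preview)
Your proof is correct and follows essentially the same approach as the paper's: closure of the commutator set under inversion and conjugation gives the first two bullets, the counting argument with Lagrange gives perfectness, and multiplying $u$ by a nontrivial commutator gives the last bullet. The only cosmetic differences are that you make the inequality $|G|/2<|G|-1$ explicit and obtain the nontrivial commutator from nonabelianness of a nontrivial perfect group rather than directly from $|G|>2$.
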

\begin{proof} Let $u$ be the unique noncommutator element of $G$.
\begin{itemize}
    \item The inverse of a commutator is a commutator, and so the inverse of a noncommutator is a noncommutator. Since $u$ is unique, it equals its inverse. Thus, $u^2 = 1$.
    \item Since conjugation by $g\in G$ is an automorphism of $G$, it follows that $gug^{-1}$ must be a noncommutator as well. Since $u$ is the only noncommutator it must be $gug^{-1}=u$ for any $g\in G$.  Therefore, for any $g\in G$, we have $gu=ug$, which means $u \in Z(G)$.
    \item Let $S$ be the set of all commutators of $G$, so $|S|+1=|G|$. Since $G'=\langle S \rangle\subseteq G$ we get that $|G'|\geq |S| = |G|-1\geq 2$ and either $G'=S$ or $G'=G$. But $|G'|$ divides $|G|$, and thus $|G|=|G'|$ implying that $G=G'$ showing that $G$ is perfect.
    \item Since $|G| > 2$, we can choose some nontrivial commutator $g\in G$. Then $u=(ug)g^{-1}$. Since $g$ is a commutator, we know $g^{-1}$ is a commutator as well. If $ug$ were a noncommutator, then $ug=u$ because $u$ is the only noncommutator, and so $g=1$ contradicting that $g$ is nontrivial. So $ug$ is a commutator too. Therefore, $u$ is a product of two commutators, and thus the commutator length of $G$ is $2$.
\end{itemize}
\end{proof}

The third point above is more general. Let $G$ be a group of order $n$ with exactly $k$ noncommutators; if $n-k>\frac{n}{2}$, then $G$ is perfect.  Proposition \ref{four_props} tells us that we need to search in the class of finite perfect groups. In GAP we used Holt and Plesken's list of finite perfect groups \cite{holtplesken}. We use the properties of Proposition \ref{four_props} together with Frobenius' characterization (see below) of commutator elements via character theory to conduct an efficient search.

\begin{theorem}[Frobenius \cite{Frobenius1896}]
Let $G$ be a finite group with identity element 1 and let $\operatorname{Irr}(G)$ be the set of all irreducible characters $\chi:G\to \mathbb{C}$ of $G$. Then $g\in G$ is a commutator if and only if 
\[
    \sum_{\chi \in \operatorname{Irr}(G)} \frac{\chi(g)}{\chi(1)} \neq 0\,.
    \label{comm_criterion}
\]
\end{theorem}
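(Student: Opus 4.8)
The plan is to prove Frobenius' criterion via the standard computation with the group algebra $\mathbb{C}G$, reducing the count of commutator representations to a class-function identity. First I would fix $g \in G$ and consider the quantity $N(g)$, defined as the number of ordered pairs $(x,y) \in G \times G$ with $[x,y] = g$; the claim is essentially that $N(g) = |G| \sum_{\chi} \chi(g)/\chi(1)$, so that $g$ is a commutator exactly when this sum is nonzero. To get at $N(g)$ I would introduce, for each conjugacy class $C$ of $G$, the class sum $\widehat{C} = \sum_{x \in C} x$ in the center of $\mathbb{C}G$, and more directly work with the element $\sum_{x \in G} x g x^{-1}$, or better, count solutions of $xyx^{-1}y^{-1} = g$ by summing over $x,y$.

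The key computational step is the well-known formula expressing the number of ways to write $g$ as a product of commutators (here just one commutator) in terms of character values. Concretely, one uses the orthogonality relations together with the fact that for a fixed $h$, the map $x \mapsto xhx^{-1}$ ranges over the class of $h$ with multiplicity $|C_G(h)|$, to obtain
\[
    N(g) = \sum_{\chi \in \operatorname{Irr}(G)} \frac{|G|}{\chi(1)} \chi(g).
\]
The cleanest route is via the second orthogonality relation: write $N(g) = \#\{(x,y) : xyx^{-1} = gy\}$, then for each $y$ count the $x$ conjugating $y$ to $gy$, which is $|C_G(y)|$ if $y$ and $gy$ are conjugate and $0$ otherwise; expanding the indicator of "conjugate" via $\frac{1}{|C_G(y)|}\sum_{\chi}\chi(y)\overline{\chi(gy)}$ and summing over $y$, the $y$-sum collapses using column orthogonality and the central-character identity $\sum_y \chi(y)\overline{\psi(y h)} $-type manipulations to give the displayed formula. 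Once $N(g) = \frac{|G|}{1}\sum_\chi \chi(g)/\chi(1)$ is established, the theorem is immediate: $g$ is a commutator iff $N(g) > 0$ iff the sum is nonzero (and the sum is automatically a nonnegative real, being $N(g)/|G|$, though one only needs that it is zero or not).

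The main obstacle, and the step requiring genuine care, is the algebraic identity manipulating the double sum over $G \times G$ down to a single sum over $\operatorname{Irr}(G)$: one must correctly invoke both orthogonality relations and handle the class-sum / central-character bookkeeping without sign or normalization errors, in particular tracking the factors of $|G|$, $\chi(1)$, and $|C_G(y)|$. A slicker alternative that avoids some of this is to work in $\mathbb{C}G$ with the idempotents $e_\chi = \frac{\chi(1)}{|G|}\sum_{x} \overline{\chi(x)}\, x$ and compute the coefficient of $g$ in $\sum_x x\, e_\chi\, x^{-1}$ or in products of class sums; since each $e_\chi$ acts as a scalar $\chi(\cdot)/\chi(1)$ in the irreducible representation affording $\chi$, taking traces recovers the character sum directly. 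I would present whichever of these two equivalent derivations is shorter, cite Frobenius' original paper \cite{Frobenius1896}, and note that this is a classical result for which a modern textbook reference (e.g.\ Isaacs' character theory book) also suffices.
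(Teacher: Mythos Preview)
The paper does not prove this theorem at all: it is stated as a classical result, attributed to Frobenius with the citation \cite{Frobenius1896}, and then used as a black box in the search algorithm. There is therefore no ``paper's own proof'' to compare your proposal against.

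Your proposed argument is the standard one and is essentially correct. The route via $N(g)=\#\{(x,y):xyx^{-1}=gy\}$, counting the $x$'s for fixed $y$ as $|C_G(y)|\cdot[\,y\sim gy\,]$, and then replacing the conjugacy indicator by $\frac{1}{|C_G(y)|}\sum_\chi \chi(y)\overline{\chi(gy)}$ via second orthogonality, does collapse to $N(g)=\sum_\chi\sum_y \chi(y)\overline{\chi(gy)}$. The remaining inner sum is handled by the convolution identity $\sum_{y}\chi(xy)\overline{\chi(y)}=\frac{|G|}{\chi(1)}\chi(x)$ for irreducible $\chi$; you should state this identity explicitly rather than gesture at ``central-character identity \ldots-type manipulations,'' since that is the one nontrivial input beyond orthogonality. (A minor bookkeeping point: this route first yields $N(g)=|G|\sum_\chi \overline{\chi(g)}/\chi(1)$, and one then observes the sum is real, or equivalently that $N(g)=N(g^{-1})$, to match the stated form.) Your alternative via the central idempotents $e_\chi$ is equally valid and arguably cleaner. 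Either way, a textbook reference such as Isaacs' \emph{Character Theory of Finite Groups}, Problem~3.10, would be appropriate in addition to Frobenius' original.
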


When a group satisfies the properties of Proposition \ref{four_props}, we compute its character table and use the above theorem to count the number of commutators in the group. In Figure \ref{fig:pseudocode}, we present pseudocode for our algorithm. The reader may also run the entire GAP code in a Juypter Notebook \cite{mycode}. Our algorithm detected two perfect groups of order $368640$ which have exactly one noncommutator. These are \verb|PG368640.31| and \verb|PG368640.33| in the GAP library of perfect groups. We learn from GAP that \verb|PG368640.31| is a semidirect product $\mathbb{Z}_2^{10} \rtimes_\varphi A_6$ for some group homomorphism $\varphi:A_6\to \Aut(\Z_2^{10})$ and \verb|PG368640.33| is a series of semidirect products and non-splitting extensions of $\Z_2$ and $A_6$.   
\begin{figure}
\begin{tcolorbox}[colback=white, colframe=black!60, title=Pseudocode for the search algorithm]
    \begin{verbatim}
    set m = 1
    while m > 0
        for group G in perfect groups of order m
                check if commutator length is 2
                check if center is of even order
            if both pass 
                Get CharacterTable tbl of G 
                Get the conjugacy classes of G
    
            set n = 0
            # n is the number of commutators
                    
            #Iterate through every cojugacy class
            for conjugacy class C do:
                set sum = 0
                
                for every irreducible character chi in tbl
                        sum = sum + (chi[C] / chi[1]);
                End loop;
    
                if sum != 0 then
                        n = n + |C|;
            End loop;
                    
                if n+1 = |G| (group found!). 
    Terminate.
    \end{verbatim} \label{alg_for_search}
\end{tcolorbox}
   \caption{Pseudocode for the algorithm.}
    \label{fig:pseudocode}
\end{figure}

\section{Description of the group $\Z_2^{10} \rtimes_\varphi A_6$}

We first recall the definition of the semidirect product of groups. Given groups $N$ and $H$ together with a group homomorphism $\phi:H\to \Aut(N)$, their semidirect product $N\rtimes_\phi H$ with respect to $\phi$ is the group whose underlying set is the cartesian product $N\times H$ and where for elements $(n,h)$ and $(n',h')$ in $N\times H$ their product is defined to be
\[(n,h)\,(n',h')=(n\phi_h(n')\,,\,hh')\]
where $\phi_h=\phi(h)$.

To describe the group structure of $\mathbb{Z}_2^{10} \rtimes_\varphi A_6$ found by the algorithm we need to present its group homomorphism $\varphi:A_6 \to \Aut(\Z_2^{10})$. Recall that the operation of the group $\Z_2=\{0,1\}$ is addition modulo 2, and $A_6$ is the alternating group of degree 6 under function composition. Observe that $\Z_2^{10}$ is an abelian group of exponent $2$, so we view it as a vector space over the field $\mathbb{F}_2$. We think of elements of $\Z_2^{10}$ as column vectors of $0$s and $1$s of length $10$. Moreover, every automorphism of $\Z_2^{10}$ is represented by an invertible $10\times 10$ boolean matrix. Let this be expressed by the group isomorphism $\psi:\Aut(\Z_2^{10})\to \GL_{10}(\mathbb{F}_2)$ which sends an automorphism to the matrix whose columns are the images of the standard basis of $\Z_2^{10}$ under that automorphism. 

Furthermore, to understand the group homomorphism $\varphi:A_6 \to \Aut(\Z_2^{10})$ we only need to know its action on a generating set of $A_6$ to get a complete description of $\varphi$. Given $\alpha \in A_6$, we let $M_\alpha=(\psi\circ \varphi)(\alpha)$ be the invertible $10\times 10$ boolean matrix that represents the automorphism $\varphi(\alpha)\in\Aut(\Z_2^{10})$. The two cycles  $\sigma = (1,2,3,4,5)$ and $\eta = (4,5,6)$ generate $A_6$. Using GAP, we compute their corresponding matrices:
\[   
    M_\sigma = 
    \begin{bmatrix}
    	1 & 1 & 1 & 1 & 1 & 0 & 0 & 0 & 0 & 0 \\
            0 & 0 & 1 & 1 & 1 & 0 & 0 & 0 & 0 & 0 \\
            1 & 0 & 1 & 0 & 1 & 0 & 0 & 0 & 0 & 0 \\
            0 & 1 & 1 & 1 & 1 & 0 & 0 & 0 & 0 & 0 \\
            0 & 1 & 0 & 1 & 1 & 0 & 0 & 0 & 0 & 0 \\
            0 & 0 & 0 & 0 & 0 & 0 & 1 & 0 & 1 & 1 \\
            0 & 0 & 0 & 0 & 0 & 0 & 1 & 0 & 0 & 0 \\
            0 & 0 & 0 & 0 & 0 & 1 & 0 & 1 & 1 & 0 \\
            0 & 0 & 0 & 0 & 0 & 0 & 0 & 1 & 0 & 0 \\
            0 & 0 & 0 & 0 & 0 & 0 & 0 & 0 & 1 & 0 \\
    \end{bmatrix}
     \text{ ~~~ and ~~~ }
    M_\eta = 
    \begin{bmatrix}
    	0 & 1 & 0 & 0 & 1 & 0 & 0 & 0 & 0 & 0 \\
            1 & 1 & 0 & 0 & 1 & 0 & 0 & 0 & 0 & 0 \\
            0 & 1 & 0 & 1 & 1 & 0 & 0 & 0 & 0 & 0 \\
            1 & 0 & 1 & 1 & 0 & 0 & 0 & 0 & 0 & 0 \\
            0 & 0 & 0 & 0 & 1 & 0 & 0 & 0 & 0 & 0 \\
            0 & 0 & 0 & 0 & 0 & 1 & 1 & 0 & 1 & 0 \\
            0 & 0 & 0 & 0 & 0 & 0 & 0 & 0 & 1 & 0 \\
            0 & 0 & 0 & 0 & 0 & 0 & 1 & 0 & 0 & 0 \\
            0 & 0 & 0 & 0 & 0 & 0 & 0 & 1 & 0 & 0 \\
            0 & 0 & 0 & 0 & 0 & 0 & 0 & 0 & 0 & 1 \\
    \end{bmatrix}
    \label{matrices_gens}
\]

Since $\sigma$ and $\eta$ generate $A_6$, we can thus compute $M_\alpha$ for $\alpha \in A_6$. We can now describe the group operation of $\Z_2^{10} \rtimes_\varphi A_6$. Choose $(s,\alpha)$ and $(t,\beta)$ in $\mathbb{Z}_2^{10} \rtimes_\varphi A_6$. Their product is $$( s,\ \alpha )\,( t,\ \beta ) \ =\ ( s+M_{\alpha } t\ ,\ \alpha \beta ).$$ 
It remains to get $M_\alpha$ to find the product. We can compute $M_\alpha$ using the matrices $M_\sigma$ and $M_\eta$ above. Towards this, we express $\alpha$ as a product of the generators $\sigma$ and $\eta$ of $A_6$ and their inverses. For example, when $\alpha=\sigma^{-2}\eta\sigma^3$, we get

$$M_\alpha=(\psi\circ \varphi)(\alpha)=(\psi\circ \varphi)(\sigma^{-2}\eta\sigma^3)=(M_\sigma)^{-2} M_\eta (M_\sigma)^3.$$

Moreover, the inverse of an element $(s,\alpha)$ in $\mathbb{Z}_2^{10} \rtimes_\varphi A_6$ is $(M_{\alpha^{-1}}s, \alpha^{-1})$. We now give a formula for the commutator in $\mathbb{Z}_2^{10} \rtimes_\varphi A_6$. The commutator is  
\begin{equation}
[( s ,\ \alpha )\ ,\ ( t,\ \beta )]=(s+M_\alpha t+M_{\alpha\beta\alpha^{-1}}s+M_{[\alpha,\beta]}t\ ,\ [\alpha,\beta]).
\end{equation}
This is shown below. 

\begin{align*}
[( s,\ \alpha )\ ,\ ( t,\ \beta )] &= (s,\ \alpha )( t,\ \beta )( s,\ \alpha )^{-1} ( t,\ \beta )^{-1}   \nonumber\\
 &= (s+M_\alpha t\ ,\ \alpha\beta )( M_{\alpha^{-1}}s\ ,\ \alpha ^{-1}) ( M_{\beta^{-1}}t\ ,\ \beta ^{-1})   \nonumber\\
 &= (s+M_\alpha t\ ,\ \alpha\beta )( M_{\alpha^{-1}}s+M_{\alpha^{-1}\beta^{-1}}t\ ,\ \alpha ^{-1}\beta ^{-1})   \nonumber\\
 &= (s+M_\alpha t+M_{\alpha\beta}(M_{\alpha^{-1}}s+M_{\alpha^{-1}\beta^{-1}}t)\ ,\ \alpha\beta \alpha ^{-1}\beta ^{-1})   \nonumber\\
 &= (s+M_\alpha t+M_{\alpha\beta\alpha^{-1}}s+M_{\alpha\beta\alpha^{-1}\beta^{-1}}t\ ,\ \alpha\beta \alpha ^{-1}\beta ^{-1})   \nonumber\\
 &= (s+M_\alpha t+M_{\alpha\beta\alpha^{-1}}s+M_{[\alpha,\beta]}t\ ,\ [\alpha,\beta]).   
\end{align*}

Next, we search for the unique noncommutator element $u$ of $\mathbb{Z}_2^{10} \rtimes_\varphi A_6$ which we know is in the center. Pick $(c, \gamma)$ in the center of $\Z_2^{10} \rtimes_\varphi A_6$. Clearly, $\gamma$ must be in the center of $A_6$, which is centerless, and thus, $\gamma$ is the identity permutation $\varepsilon$ of $A_6$. Furthermore, the vector $c$ is an eigenvector of $M_\alpha$ with eigenvalue 1 for all $\alpha \in A_6$. To see this, let $(t,\alpha)$ be an element in $\mathbb{Z}_2^{10} \rtimes_\varphi A_6$. Since $(c, \varepsilon)$ is central,
\begin{align*}
    (c, \varepsilon)(t, \alpha)&=(t, \alpha)(c, \varepsilon)\\
    (c+M_\varepsilon t\ , \ \varepsilon\alpha)&=(t+M_\alpha c\ , \ \alpha\varepsilon)\\
    (c+t\ , \ \alpha)&=(t+M_\alpha c\ , \ \alpha)
\end{align*}
Thus, $c=M_\alpha\, c$, and so $c$ is an eigenvector of $M_\alpha$ for every $\alpha\in A_6$. Observe that a common eigenvector of the generators $M_\sigma$ and $M_\eta$ is an eigenvector of every $M_\alpha$.  This characterization of central elements was used to determine the center of $\mathbb{Z}_2^{10} \rtimes_\varphi A_6$. Using GAP, we compute the intersection of the eigenspaces of $M_\sigma$ and $M_\eta$. The center of $\Z_2^{10}\rtimes_\varphi A_6$ has 4 elements and one of them is the unique noncommutator $u=(q, \varepsilon)$ where  
\[
    q = 
    \begin{bmatrix}
        1 & 0 & 1 & 0 & 1 & 1 & 1 & 1 & 1 & 1
    \end{bmatrix}.
\]

More information on $\mathbb{Z}_2^{10} \rtimes_\varphi A_6$ appears in \cite[Section 5.3.13]{holtplesken}. For an independent proof establishing that $u=(q,\varepsilon)$ is the unique noncommutator of $\Z_2^{10}\rtimes_\varphi A_6$, we suggest using \cite[Proposition 25]{Serre1977} where, using the method of ``little groups" of Wigner and Mackey, Serre describes the irreducible characters of a group that is a semidirect product of an abelian normal subgroup $A$ with a subgroup $H$. In our setting $A=\Z_2^{10}$ and $H=A_6$.

\bigskip

\section*{Acknowledgements}
The authors would like to thank Isabel Müller and Eamonn O'Brien for their fruitful suggestions. They would also like to thank the referee for helpful suggestions on the presentation of this article.

\bibliographystyle{plain}
\bibliography{sample}
\end{document}